\documentclass{amsart}
\usepackage{amsfonts,amssymb,amscd,amsmath,enumerate,verbatim,calc,graphicx}
\usepackage[all]{xy}
\newtheorem{theorem}{Theorem}[section]
\newtheorem{corollary}[theorem]{Corollary}
\newtheorem{lemma}[theorem]{Lemma}
\newtheorem{proposition}[theorem]{Proposition}
\theoremstyle{definition}
\newtheorem{definition}[theorem]{Definition}

\newtheorem{example}[theorem]{Example}

\newcommand{\N}{\mathbb{N}}

\newcommand{\Z}{\mathbb{Z}}

\newcommand{\sub}{\subseteq}

\newcommand{\ov}{\overline}

\newcommand{\lo}{\longrightarrow}
\newcommand{\D}{\displaystyle}

\newcommand{\wt}{\widetilde}
\newcommand{\vf}{\varphi}

\newcommand{\al}{\alpha}
\newcommand{\bt}{\beta}
\newcommand{\la}{\lambda}
\newcommand{\ka}{\kappa}
\newcommand{\peb}{p:(E,\kappa)\rightarrow (B,\lambda)}

\newcommand{\kl}{(\kappa, \lambda)}
\newcommand{\stk}{\stackrel{\ka}{\leftrightarrow}}
\newcommand{\stl}{\stackrel{\la}{\leftrightarrow}}
\begin{document}

\author[A. Pakdaman , M. Zakki]
{  Ali~Pakdaman$^{1,*}$, Mehdi~Zakki$^{1}$}

\title[Equivalent Conditions for Digital Covering Maps]
{Equivalent Conditions for Digital Covering Maps}
\subjclass[2010]{68R10, 68U05, 57M10.}
\keywords{digital covering, digital path lifting, local isomorphism}
\thanks{$^*$Corresponding author}
\thanks{E-mail addresses: a.pakdaman@gu.ac.ir, mehdizakki14@gmail.com}
\maketitle

\begin{center}
{\it $^1$Department of Mathematics, Faculty of Sciences, Golestan University,\\
P.O.Box 155, Gorgan, Iran.}
\end{center}

\vspace{0.4cm}
\begin{abstract}
It is known that every digital covering map $p:(E,\kappa)\rightarrow (B,\lambda)$ has unique path lifting property. In this paper, we show that its inverse is true when the continuous surjection map $p$ has no conciliator point. Also, we prove that a digital $(\kappa,\lambda)-$continuous surjection $p:(E,\kappa)\rightarrow (B,\lambda)$ is a digital covering map if and only if it is a local isomorphism.  Moreover, we find out a loop criterion for a digital covering map to be a radius $n$ covering map.
\end{abstract}
\vspace{0.5cm}
%\\\\\\\\\\\\\\\\\\\\\\\\\\\\\\\\\\\\\\\\\\\\\\\\\\\\\\\\\\\\\\\\\\\\\\\\\\\\\\\\\\\\\\\\\\\\\\\\\\\\\\\\\\\\\\\\\\\\\\\\\\\\\\\\\\\\\\\\\
%=========================================================================================================================================
%/////////////////////////////////////////////////////////////////////////////////////////////////////////////////////////////////////////
\section{Introduction and Motivation}

In image processing, computer graphics and modeling topology in medical
image processing algorithms, an object in the plane or $3$-space is often
approximated digitally by a set of pixels or voxels. Digital topology deals with topological properties
of this set of pixels or voxels that correspond to topological properties of the original
object. It provides theoretical foundations for important operations such as digitization,
connected component labeling and counting, boundary extraction, contour filling, and
thinning. Digitization
is replacing an object by a discrete set of its points\cite{KR,Kov}.

In recent years, computing topological invariants has been of great importance in understanding the shape of an arbitrary $2$-dimensional
(2D) or $3$-dimensional (3D) object \cite{Kac}. The most powerful invariant of these objects is the fundamental group \cite{Sp}, which is unfortunately
difficult to work with, although for 3D objects, this problem is decidable but no practical algorithm
has been found yet.

The digital fundamental group of a discrete object was introduced in Digital Topology
by Kong and Stout \cite{K,ST}. Boxer \cite{B2} has shown how classical methods of Algebraic Topology may be used
to construct the digital fundamental group which is useful tool for Image Analysis. The digital covering space is an important tool for computing fundamental groups of digital images. A digital covering space has been introduced by Han \cite{H1}. Boxer \cite{B3} has developed further the topic of digital covering space by deriving digital analogs of classical results
of Algebraic Topology concerning the existence and properties of digital universal covering
spaces. Boxer and Karaca \cite{B4, B5} have classified digital covering spaces by the conjugacy
classes of image subgroups corresponding to a digital covering space.

   Lots of the researches in digital covering theory are digitization of the concepts in Topology and Algebraic Topology. In Algebraic Topology, covering maps are local isomorphism (local homeomorphism) and also satisfy all the lifting problems. But the converse is not true. In fact, every local isomorphism is not necessarily a covering map and there are some maps in which enjoy various concepts of lifting, but are not a covering map.

   Despite the behavior of spaces in General Topology are locally complicated, digital images are locally simple and this leads us to investigate the conditions such that makes some concepts to be equivalent. For this, after some reminders and preliminary results about digital topology and digital covering map, we introduce notions ``digital path lifting property", ``uniqueness of digital path lifts" and ``unique digital path lifting property"
for a digitally continuous map and will compare them by some examples.

 Digital path lifting property means that every digital path has a lifting started at a given point in the
appropriate fiber. By uniqueness of digital path lifts we mean that if a digital path has a lifting at a given point, it must be unique. Eventually, a map has unique digital path lifting property if it has both of digital path lifting property and uniqueness of digital path lifts.
Every digital covering map has unique digital path lifting property \cite{H1}.
By proving some basic results about maps equipped with such properties, we show that every continuous surjection with unique digital path lifting property is a covering map when it has no conciliator point and by some example will emphasis that these hypotheses are essential. This shows that digital covering theory is not a special case of the well known concept of a graph covering projection because in covering graph theory a graph map is a covering graph if and only if it has unique path lifting property \cite{Boldi}.

In General Topology, a covering map is a local isomorphism and local isomorphisms are not necessarily covering map. Inspired by this, in the all of the researches it is claimed by a misplaced example that digital local isomorphisms are not necessarily covering map (for example see\cite{B3,H3}).
 We show that every digital local isomorphism is a covering map. Digital versions of some fundamental theorems in Algebraic Topology are satisfied for radius 2 local isomorphisms and this motivates us to find out a loop criterion for a digital covering map to be a local isomorphism with radius $n$.

\section{Notations and preliminaries}
Let $\mathbb{Z}$ be the set of integers. Then $\Z^n$ is the set of lattice points in the $n$-dimensional
Euclidean space. Let $X\sub\Z^n$ and let $\ka$ be some adjacency relation for the members of
$X$. Then the pair $(X, \ka)$ is said to be a (binary) digital image. For a positive integer $u$ with $1\leq u \leq n$,
an adjacency relation of a digital image in $\Z^n$ is defined as follows:\\
Two distinct points
$p = (p_1, p_2, . . . , p_n)$ and $q = (q_1, q_2, . . . , q_n)$ in $\Z^n$ are $l_u$-adjacent \cite{Ros1} if
there are at most $u$ distinct indices $i$ such that $|p_i - q_i| = 1$ and for all indices $j$, $p_j=q_j$ if $|p_j - q_j | \neq 1$.
An $l_u$-adjacency relation on $\Z^n$ can be denoted by the number of points that are $l_u$-adjacent
to a given point $p\in\Z^n$. For example,
\begin{itemize}
\item The $l_1$-adjacent points of $\Z$ are called 2-adjacent.
\item The $l_1$-adjacent points of $\Z^2$ are called 4-adjacent and the $l_2$-adjacent points in $\Z_2$ are
called 8-adjacent.
\item The $l_1$-adjacent, $l_2$-adjacent and $l_3$-adjacent points of $\Z^3$ are called 6-adjacent, 18-adjacent, and 26-adjacent, respectively.
\end{itemize}
More general adjacency relations are studied in \cite{Her}.

Let $\ka$ be an adjacency relation defined on $\Z^n$. A digital image $X\sub \Z^n$ is \textbf{$\ka$-connected}
 \cite{Ros2} if and only if for every pair of different points $x, y \in X$, there is a set $\{x_0, x_1, . . . , x_r\}$ of points of $X$ such that $x = x_0, y = x_r$ and $x_i$ and $x_{i+1}$ are $\ka$-adjacent
where $i = 0, 1, . . . , r - 1$. A $\ka$-component of a digital image $X$ is a maximal $\ka$-connected
subset of $X$.

\begin{definition}
Let $X \sub \Z^n$ and $Y \sub \Z^m$ be digital images with $\ka$-adjacency and $\la$-adjacency, respectively.
A function $f : X \lo Y$ is said to be \textbf{$(\ka, \la)$-continuous} (\cite{B2,Ros2}) if for every
$\ka$-connected subset $U$ of $X$, $f(U)$ is a $\la$-connected subset of $Y$. We say that such a
function is digitally continuous.
\end{definition}
The following proposition let us to interpret digital continuity by adjacency relations.
\begin{proposition}(\cite{B2,Ros1})
Let $(X,\ka)$ in $\Z^{n}$ and $(Y,\la)$ in $\Z^m$ be digital images.
A function $f : X \lo Y$ is {$(\ka, \la)$-continuous} if and
only if for every $\ka$-adjacent points $x_0, x_1\in X$, either $f(x_0) = f(x_1)$ or $f(x_0)$ and $f(x_1)$
are $\la$-adjacent in $Y$.
\end{proposition}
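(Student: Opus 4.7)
The plan is to handle the two implications separately, reducing each to the observation that a two-element subset $\{a,b\}$ of a digital image is $\la$-connected precisely when $a=b$ or $a$ and $b$ are $\la$-adjacent. This observation follows immediately from the definition of $\la$-connectedness.

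For the forward direction, suppose $f$ is $(\ka,\la)$-continuous and take any $\ka$-adjacent $x_0, x_1 \in X$. Since $\{x_0, x_1\}$ is $\ka$-connected by the definition of $\ka$-adjacency, continuity forces the image $\{f(x_0), f(x_1)\}$ to be $\la$-connected in $Y$; the preliminary observation then yields the dichotomy $f(x_0) = f(x_1)$ or $f(x_0)$ and $f(x_1)$ are $\la$-adjacent.

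For the converse, I would assume the adjacency condition and let $U \sub X$ be $\ka$-connected. To show $f(U)$ is $\la$-connected, pick $y, y' \in f(U)$ with preimages $x, x' \in U$. The $\ka$-connectedness of $U$ furnishes a chain $x = x_0, x_1, \ldots, x_r = x'$ in $U$ with consecutive terms $\ka$-adjacent; applying the hypothesis to each step, consecutive images $f(x_i), f(x_{i+1})$ are either equal or $\la$-adjacent. Deleting consecutive duplicates converts this into a bona fide $\la$-adjacency chain in $f(U)$ connecting $y$ to $y'$.

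No serious obstacle is expected, since the argument is essentially a translation between an edge-preservation condition and the preservation of connected subgraphs. The only routine care is in the collapsing step of the converse (which stays inside $f(U)$ because every $x_i$ lies in $U$) and in handling the trivial cases $y=y'$ or $|U|\leq 1$.
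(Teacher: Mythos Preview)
Your argument is correct and is exactly the standard proof of this equivalence. Note, however, that the paper does not supply its own proof of this proposition: it is stated with attribution to \cite{B2,Ros1} and used as a known preliminary fact, so there is no proof in the paper to compare against.
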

For $a,b\in\Z$ with $a<b$, a \textbf{digital interval} \cite{B1} is the set of the form
$$[a, b]_{\Z} = \{z \in \Z|a \leq z \leq b\}.$$
\begin{definition}
By a \textbf{digital $\ka$-path} from $x$ to $y$ in digital image $(X,\ka)$, we mean a $(2, \ka)$-continuous function
$f : [0,m]_{\Z} \lo X$ such that $f(0) = x$ and $f(m) = y$. If $f(0) = f(m)$ then the $\ka$-path is
said to be closed, and f is called a $\ka$-loop.
\end{definition}
 Let $f : [0,m - 1]_{\Z} \lo X\sub \Z^n$ be a
$(2, \ka)$-continuous function such that $f(i)$ and $f(j)$ are $\ka$-adjacent if and only if $j = i \pm 1$
mod $m$. Then the set $f([0,m - 1]_{\Z})$ is a simple closed $\ka$-curve containing $m$ point which is denoted by $SC_{\ka}^{n,m}$. If $f$ is a constant function, it is called a trivial loop.

If $f : [0,m_1]_{\Z} \lo X$ and $g : [0,m_2]_{\Z} \lo X $ are digital $\ka$-paths with $f(m_1) = g(0)$, then
define the product \cite{KH} $(f * g) : [0,m_1 + m_2]_{\Z} \lo X$ by
$$
(f * g)(t) =\begin{cases}
f(t) \ \ if \ \ t \in [0,m_1]_{\Z};\\
g(t - m_1)\ \  if\ \  t \in [m_1,m_1 + m_2]_{\Z}.
\end{cases}$$

Let $(E, \ka)$ be a digital image and let $\varepsilon\in N$. The $\ka$-neighborhood [8] of $e_0\in E$ with
radius $\varepsilon$ is the set
$N(e_0, \varepsilon) = \{e \in E |~ l_{\ka}(e_0, e) \leq \varepsilon\} \cup \{e_0\}$,
where $l_{\ka}(e_0, e)$ is the length of the shortest $\ka$-path in $E$ from $e_0$ to $e$.

By the above notations, a function $f : X \lo Y$ is a \textbf{$(\ka,\la)$-isomorphism} \cite{B3}, denoted by $X\stackrel{(\ka,\la)}{\approx}Y$, if $f$ is a $(\ka,\la)$-continuous bijection and further $f^{-1} : Y \lo X$ is $(\la,\ka)$-continuous. If $n = m$ and $\ka=\la$, then $f$ is called $\ka$-isomorphism.

\begin{definition}(\cite{B2})
Let $(X,\ka)$ and $(Y,\la)$ be digital images and let $f, g : X\lo Y$ be $(\ka, \la )$-continuous functions. Suppose that there is a
positive integer $m$ and a function $F : X \times [0,m]_{\Z}\lo Y$ such that
\begin{itemize}
\item For all $x\in X$, $F(x, 0) = f(x)$ and $F(x,m) = g(x)$;
\item For all $x\in X$, the induced function $F_x : [0,m]_{\Z}\lo Y$ defined by $F_x(t) = F(x, t)$ for all
$t\in [0,m]_{\Z}$ is $(2, \la )$-continuous; and
\item For all $t \in [0,m]_{\Z}$, the induced function $F_t : X\lo Y$ defined by $F_t(x) = F(x, t)$ for all
$x \in X$ is $(\ka, \la )$-continuous.
\end{itemize}
Then $F$ is called a \textbf{digital $(\ka, \la )$-homotopy} between $f$ and $g$, denoted by $f \stackrel{(\ka,\la)}{\simeq} g$, and
$f$ and $g$ are said to be digital $(\ka, \la )$-homotopic in $Y$ by $F$.
\end{definition}
Digital $(\ka,\la)$-homotopy relation is an equivalence relation
among digitally continuous functions $f : (X, \ka) \lo (Y, \la)$ \cite{B2}.

Let $f$ and $f'$ be $\ka$-loops in the pointed digital image $(X, x_0)$. We say $f'$ is a trivial extension
of $f$ if there are sets of $\ka$-paths $\{f_1, f_2, . . . , f_r\}$ and $\{F_1, F_2, . . . , F_p\}$ in $X$ such that\\
(1) $r \leq p$;\\
(2) $f = f_1*f_2 * ... * f_r$;\\
(3) $f_0 = F_1 * F_2 * ... * F_p$;\\
(4) There are indices $1 \leq i_1 < i_2 < ... < i_r \leq p$ such that $F_{i_j} = f_j,\  1 \leq j \leq r$ and
$i \notin \{i_1, i_2, . . . , i_r\}$ implies $F_i$ is a trivial loop\cite{B2}.

Two loops $f$, $f'$ with the same base point $x_0 \in X$ belong to the same loop class $[f]_X$
if they have trivial extensions that can be joined by a homotopy that keeps the endpoints
fixed. Let $\pi_1^{\ka}(X,x_0)$ be the set of all such classes, $[f]_X$.
The operation $*$ enables us to define an operation on $\pi_1^{\ka}(X,x_0)$ via
$$[f]_X .[g]_X = [f * g]_X.$$
This operation is well defined, and makes $\pi_1^{\ka}(X,x_0)$ into a group in which the
identity element is the class $[\overline{x_0}]$ of the constant loop $\ov{x_0}$ and in which inverse elements
are given by $[f]^{-1} = [f^{-1}]$, where $f^{-1} :[0,m]_{\Z}\lo X$ is the loop defined
by
$f^{-1}(t)=f(m-t)$\cite{B2}.

\begin{definition}\cite{H2}
For two digital spaces $(X, \ka)$ in $\Z^{n}$ and $(Y,\la)$ in $\Z^m$, a $(\ka,\la)$-continuous
map $h : X \lo Y$ is called \textbf{local $(\ka,\la)$-isomorphism} if for every $x\in X$, $h|_{N_{\ka} (x; 1)}$ is a $(\ka,\la)$-isomorphism
onto $N_{\la} (h(x); 1)$. If $n = m$ and $\ka=\la$, then the map $h$ is called local $\ka$-isomorphism.
\end{definition}
For $n\in\N$, the map $h$ is called a radius $n$ local isomorphism
if the restriction map $h|_{N_{\ka} (x,n)}:N_{\ka} (x,n)\lo N_{\la}(h(x),n)$ is a $(\ka,\la)$-isomorphism.
\begin{definition}\cite{B3,H1,H2}\label{d1}
Let $(E, \ka)$ and $(B, \la)$ be digital images and $p : E \lo B$ be a
$(\ka,\la)$-continuous surjection map. The map $p$ is called a \textbf{$(\ka,\la)$-covering map} if and only if for
each $b \in B$ there exists an index set $M$ such that\\
(1) $p^{-1}(N_{\la}(b, 1)) = \D\bigsqcup_{i\in M} N_{\ka}(e_i, 1)$ with $e_i \in p^{-1}(b)$;\\
(2) if $i, j \in M$, $i \neq j$, then $N_{\ka} (e_i, 1) \cap N_{\ka} (e_j , 1) = \emptyset$; and\\
(3) the restriction map $p|_{N_{\ka} (e_i,1)} : N_{\ka} (e_i, 1) \lo N_{\la} (b, 1)$ is a $(\ka, \la)$-isomorphism
for all $i \in M$.
\end{definition}
 Moreover, $(E; p; B)$ is said to be a $(\ka,\la)$-covering and $(E,\ka)$ is called
a digital $(\ka,\la)$-covering space over $(B,\la)$. Also, $N_{\la}(b, 1)$ is called an elementary $\la$-neighborhood of $b$.

 It is notable that in the property (1) of the original definition of digital covering map by Han \cite{H1}, there was $N_{\la}(b, \varepsilon)$, for an $n\in\N$ which is simplified by Boxer \cite{B3}. Also, we can replace $(\ka,\la)$-continuous surjection
with surjection because surjective map $p$ with the properties (1) and (3) of definition is $(\ka,\la)$-continuous.

In this paper, all the digital spaces assumed to be connected.

%\\\\\\\\\\\\\\\\\\\\\\\\\\\\\\\\\\\\\\\\\\\\\\\\\\\\\\\\\\\\\\\\\\\\\\\\\\\\\\\\\\\\\\\\\\\\\\\\\\\\\\\\\\\\\\\\\\\\\\\\\\\\\\\\\\\\\\\\\\
%==========================================================================================================================================
%//////////////////////////////////////////////////////////////////////////////////////////////////////////////////////////////////////////
\section{Coverings are derived from unique path lifting }
Like in the Algebraic Topology, digital covering maps have also good behavior with lifting problems. In this section, at first we list some results of the other papers about digital coverings and lifting problems which are digitization of similar results in Algebraic Topology. Then by modification of the notions digital path lifting and unique path lifting, we show how digital covering maps can be derived from unique path lifting property.
\begin{definition}\cite{H1}
Let $(E,\ka)$, $(B,\la)$ and $(X,\mu)$ be digital images, let $p : E \lo B$ be a $(\ka,\la)$-covering
map, and let $f : X \lo B$ be $(\mu,\la)$-continuous. A \textbf{lifting} of $f$ with respect to $p$ is a
$(\mu,\ka)$-continuous map $\wt{f} : X \lo E$ such that $p \circ \wt{f} = f$.
\end{definition}
\begin{theorem}\cite{H1}\label{t01}
Let $(E, \ka)$ be a digital image and $e_0 \in E$. Let $(B, \la)$ be a digital image
and $b_0 \in B$. Let $p : E \lo B$ be a $(\ka,\la)$-covering map such that $p(e_0) = b_0$. Then any
$\la$-path $\al:[0,m]_{\Z} \lo B$ beginning at $b_0$ has a unique lifting to a path $\wt{\al}$ in $E$ beginning at $e_0$.\\
%(2) For any two $\ka$-paths $\al,\bt : [0,m]_{\Z} \lo E$ beginning at $e_0$,
%and satisfying $p \circ \al = p \circ\bt$, we have $\al=\bt$.
\end{theorem}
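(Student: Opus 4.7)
The plan is to prove both existence and uniqueness simultaneously by induction on the length $m$ of the path $\alpha:[0,m]_{\Z}\to B$. For the base case $m=0$, there is nothing to do: the only possible lift sends $0$ to $e_0$, and it is trivially $(2,\kappa)$-continuous. For the inductive step, I will assume that the restriction $\alpha|_{[0,k]_{\Z}}$ has been lifted uniquely to some $(2,\kappa)$-continuous $\widetilde{\alpha}_k:[0,k]_{\Z}\to E$ with $\widetilde{\alpha}_k(0)=e_0$, and I will extend it uniquely by one step, producing $\widetilde{\alpha}_{k+1}$.

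To carry out the extension, set $b_k=\alpha(k)$, $e_k=\widetilde{\alpha}_k(k)$, so that $p(e_k)=b_k$. Since $\alpha$ is $(2,\lambda)$-continuous, $\alpha(k+1)$ lies in $N_{\lambda}(b_k,1)$. Applying Definition \ref{d1} to $b_k$, we can write $p^{-1}(N_{\lambda}(b_k,1))=\bigsqcup_{i\in M}N_{\kappa}(e_i,1)$ with the $N_{\kappa}(e_i,1)$ pairwise disjoint and with $p|_{N_{\kappa}(e_i,1)}$ a $(\kappa,\lambda)$-isomorphism onto $N_{\lambda}(b_k,1)$ for each $i$. Because $e_k\in p^{-1}(b_k)$, there is a (unique, by disjointness) index $i_k\in M$ with $e_k=e_{i_k}$. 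Then $p|_{N_{\kappa}(e_k,1)}$ being an isomorphism provides a \emph{unique} element $e_{k+1}\in N_{\kappa}(e_k,1)$ satisfying $p(e_{k+1})=\alpha(k+1)$, and I define $\widetilde{\alpha}_{k+1}(k+1):=e_{k+1}$. Continuity of the extended map at the new pair $k,k+1$ follows because $e_{k+1}\in N_{\kappa}(e_k,1)$ means $e_{k+1}=e_k$ or $e_{k+1}$ is $\kappa$-adjacent to $e_k$.

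For uniqueness, suppose $\widetilde{\alpha}$ and $\widetilde{\beta}$ are two lifts of $\alpha$ with $\widetilde{\alpha}(0)=\widetilde{\beta}(0)=e_0$. By induction on $k$, assume $\widetilde{\alpha}(k)=\widetilde{\beta}(k)=e_k$. Both $\widetilde{\alpha}(k+1)$ and $\widetilde{\beta}(k+1)$ lie in $p^{-1}(\alpha(k+1))\subseteq p^{-1}(N_{\lambda}(b_k,1))=\bigsqcup_{i\in M}N_{\kappa}(e_i,1)$, and by $(2,\kappa)$-continuity each of them lies in $N_{\kappa}(e_k,1)=N_{\kappa}(e_{i_k},1)$; the disjointness condition (2) in Definition \ref{d1} rules out any other sheet. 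On that single sheet $p$ is an isomorphism, so the two values must coincide with the unique preimage of $\alpha(k+1)$, completing the induction.

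I do not expect a genuine obstacle here; the whole argument is a one-step-at-a-time bookkeeping. The only place that requires care is ensuring that at each step the inductive hypothesis actually identifies which sheet $N_{\kappa}(e_{i_k},1)$ must be used; this is exactly where condition (2) (disjointness of the sheets) is essential, both for the well-definedness of the index $i_k$ and for the uniqueness argument. Continuity of the entire lift $\widetilde{\alpha}$ then follows automatically, since $(2,\kappa)$-continuity of a map on a digital interval is a purely local (adjacent-pairs) condition, and we have verified it at every consecutive pair during the construction.
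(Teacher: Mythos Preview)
Your argument is correct and is the standard step-by-step construction of the lift using the local isomorphism on each sheet. There is nothing to compare against, however: the paper does not supply its own proof of this theorem. It is stated with a citation to Han~\cite{H1} and used as an imported result, so the paper merely quotes it.

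One small comment on your write-up: in the uniqueness step you invoke condition~(2) (disjointness of the sheets) to ``rule out any other sheet,'' but in fact $(2,\ka)$-continuity of $\wt{\al}$ and $\wt{\bt}$ already forces $\wt{\al}(k+1),\wt{\bt}(k+1)\in N_{\ka}(e_k,1)$ directly, after which injectivity of $p$ on that single sheet (condition~(3)) finishes the job. Where you genuinely need a little care is the earlier claim that $e_k$ is one of the \emph{centers} $e_{i_k}$ rather than merely lying in some $N_{\ka}(e_{i_k},1)$: this follows because $p(e_k)=b_k=p(e_{i_k})$ and $p|_{N_{\ka}(e_{i_k},1)}$ is injective, hence $e_k=e_{i_k}$. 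With that observation your inductive step is airtight.
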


\begin{definition}
Let $\peb$ be a $\kl$-continuous surjection map. We say that\\
(i) $p$ has \textbf{digital path lifting property} if for any digital path $\al$ in $B$ and any $e\in p^{-1}(\al(0))$ there is a lifting $\wt{\al}$ of $\al$ in $E$  such that $\wt{\al}(0)=e$.\\
(ii) $p$ has the \textbf{uniqueness of digital path lifts property} if any two paths
$\al,\beta:[0,m]_{\Z}\lo E$ are equal if $p\circ\al=p\circ\beta$ and $\al(0)=\beta(0)$.\\
(iii) $p$ has the \textbf{unique path lifting property} (u.p.l, for abbreviation) if it has
both the path lifting property and the uniqueness of path lifts property.
\end{definition}
\begin{example}\label{e1}
By Theorem \ref{t01}, every digital covering map has u.p.l. Consider $\Z^2$ by 8-adjacency and $\Z$ by 2-adjacency. Then the $(8,2)$-continuous map $pr_1:\Z^2\lo \Z$ defined by $pr_1((x,y))=x$ has digital path lifting property, but not uniqueness of digital path lifts property. For this, consider $\al:[0,2]_{\Z}\lo \Z$ defined by $\al(k)=k$, for $k=0,1,2$. $pr_1^{-1}(0)=\{0\}\times \Z$ and for every $(0,j)\in pr_1^{-1}(0)$,  $\wt{\al}_j:[0,2]_{\Z}\lo\Z^2$ defined by $\wt{\al}_j(k)=(k,j)$ is a $8$-path. Then $\wt{\al}_j$ is a lifting of $\al$ beginning at $(0,j)$ and hence $pr_1$ has digital path lifting property. Now let
$$
\begin{cases}
\bt,\gamma:[0,2]_{\Z}\lo\Z^2\\
\bt(0)=(0,0),\ \bt(1)=(1,1),\ \bt(2)=(2,0)\\
 \gamma(0)=(0,0),\ \gamma(1)=(1,-1),\ \gamma(2)=(2,0).
 \end{cases}
$$
Then $pr_1\circ\bt=pr_1\circ\gamma$ and $\bt(0)=\gamma(0)$, but $\bt\neq\gamma$. Therefore $pr_1$ has not uniqueness of digital path lifts property. Also, $pr_1$ is not a digital $(8,2)$-covering because for every $e\in\Z^2$, $pr_1|_{N_8(e,1)}$ is not injective.
\end{example}
The Example \ref{e1} shows that a digitally continuous surjection with path lifting property is not necessarily a digital covering map. By the following example, we show that a digitally continuous surjection with uniqueness of digital path lifts property is not necessarily a digital covering map.
\begin{example}
Consider the map $h:\Z^+\lo SC_8^{2,4}=:(c_i)_{i\in [0,3]_{\Z}}$ given by $h(i)=c_{i\ mod\ 4}$, where $\Z^+:=\{k\in\Z| k\geq 0\}$.
let $\gamma:[0,1]_{\Z}\lo SC_8^{2,4}$ defined by $\gamma(0)=c_0$ and $\gamma(1)=c_3$. Since $0\in h^{-1}(c_0)$ and there is no lifting of $\gamma$ beginning at $0$, $h$ has not digital path lifting property.

Now, Let $\al,\beta:[0,m]_{\Z}\lo \Z^+$ be two $2$-paths in which $h\circ\al=h\circ\beta$ and $\al(0)=\beta(0)=d$.
We show that $\al=\bt$. By contrary, suppose that there is $s\in [0,m]_{\Z}$ such that $\al(s)\neq \bt(s)$. We may assume that $s$ is the smallest $t\in [0, m]_{\Z}$ such that $\al(t)\neq\bt(t)$. Thus we have the following:
$$\begin{cases}
\al(s)\neq\bt(s),\\
\al(t)=\bt(t),\  \textit{for \ all\ } t\in [0, s-1]_{\Z},\\
h\circ\al(t)=h\circ\bt(t),\  \textit{for \ all\ } t\in [0, m]_{\Z}.
\end{cases}
$$
If $k:=\al(s-1)=\bt(s-1)$, then we have
$$\begin{cases}
\al(s)=k+1,\\
\bt(s)=k-1.
\end{cases}
or\  \begin{cases}
\al(s)=k-1,\\
\bt(s)=k+1.
\end{cases}
$$
But $h\circ\al(s)=h\circ\bt(s)$ follows that $h(k-1)=h(k+1)$ which is contradiction because $h(j)=h(k)$ if an only if $j=k\ mod\ 4$.
\end{example}
%\\\\\\\\\\\\\\\\\\\\\\\\\\\\\\\\\\\\\\\\\\\\\\\\\\\\\\\\\\\\\\\\\\\\\\\\\\\
%=============================================================================
%////////////////////////////////////////////////////////////////////////////////
In the following proposition we give some basic properties of maps with u.p.l which are essential in our results and make the proofs more shorter and simpler.
\begin{proposition}\label{p1}
Let $\peb$ be a $\kl$-continuous surjection map with u.p.l. Then
\begin{itemize}
\item[(i)] If $e\stackrel{\ka}{\leftrightarrow}e'$ then $p(e)\neq p(e')$.
\item[(ii)] If $e\stackrel{\ka}{\leftrightarrow}e'$, $e\stackrel{\ka}{\leftrightarrow}e''$ and $e'\neq e''$ then $p(e'
)\neq p(e'')$.
\item[(iii)] If $p(e)\stl p(e')$ then there is a unique element $e''\in p^{-1}(p(e'))$ such that $e\stk e''$.
\item[(iv)] If $p(e)\stl b$ then there is a unique $e'\in p^{-1}(b)$ such that $e\stk e'$.
\item[(v)] If $b\stl b'$ then for every $e\in p^{-1}(b)$ there is a unique element $e'\in p^{-1}(b')$ such that $e\stk e'$.
\end{itemize}
\end{proposition}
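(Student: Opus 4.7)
The unifying strategy is to reduce every clause to the lifting behavior of paths of length at most one. Parts (i) and (ii) are immediate applications of uniqueness of lifts to two carefully chosen $\kappa$-paths that project to the same $\lambda$-path; parts (iii)--(v) use the path-lifting property to produce an adjacent candidate point and then invoke (ii) to pin down uniqueness.

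For (i), I would compare the $\kappa$-path $\alpha:[0,1]_\Z\to E$ given by $\alpha(0)=e,\ \alpha(1)=e'$ against the constant path $\beta\equiv e$. If $p(e)=p(e')$, these two paths both project to the constant $\lambda$-path at $p(e)$, both start at $e$, yet disagree at time $1$, contradicting u.p.l. Part (ii) runs identically with $\alpha(1)=e'$ and $\beta(1)=e''$.

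For (iii), I would build the $\lambda$-path $\gamma:[0,1]_\Z\to B$ with $\gamma(0)=p(e)$ and $\gamma(1)=p(e')$, lift it to $\tilde\gamma$ starting at $e$, and take $e'':=\tilde\gamma(1)\in p^{-1}(p(e'))$. Because $\lambda$-adjacent points are distinct, $p(e)\neq p(e'')$, so $e\neq e''$, and the length-one $\kappa$-path $\tilde\gamma$ yields $e\stk e''$. Uniqueness of $e''$ then follows from (ii). Clauses (iv) and (v) are essentially rephrasings: in (iv), surjectivity provides $\bar e$ with $p(\bar e)=b$, reducing to (iii); in (v), one simply applies (iv) to each $e\in p^{-1}(b)$ together with the given adjacency $b\stl b'$.

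The only delicate point, and what I see as the main obstacle, is verifying that the lifted endpoint $\tilde\gamma(1)$ is genuinely $\kappa$-adjacent to $e$ rather than equal to it in the proofs of (iii)--(v). This is handled by noting that $\lambda$-adjacency forces $p(e)\neq p(e')$, so the endpoints of $\tilde\gamma$ have different $p$-images and cannot coincide; since $\tilde\gamma$ is a $\kappa$-path of length one, its endpoints are either equal or $\kappa$-adjacent, leaving only the latter option. Everything else in the argument is bookkeeping.
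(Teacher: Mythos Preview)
Your proposal is correct and follows essentially the same approach as the paper: both arguments reduce (i) and (ii) to comparing two length-one $\kappa$-paths with a common starting point and identical projections, and both obtain (iii)--(v) by lifting the length-one $\lambda$-path and invoking (ii) for uniqueness. Your treatment is in fact slightly more careful than the paper's, since you explicitly justify why $\tilde\gamma(1)\neq e$ (using $p(e)\neq p(e')$), whereas the paper simply asserts $\tilde\alpha(0)\stk\tilde\alpha(1)$ without ruling out equality.
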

\begin{proof}
(i) Let $\al,\bt:[0,1]_{\Z}\lo E$ be defined by $\al(0)=\bt(0)=e$, $\al(1)=e$ and $\bt(1)=e'$ which are $\ka$-pathes by assumption. If $p(e)= p(e')$ then $p\circ\al=p\circ\bt$ while $\al\neq \bt$. This is contradiction and hence $p(e)\neq p(e')$.\\
(ii) Let $\al,\bt:[0,1]_{\Z}\lo E$ be defined by $\al(0)=\bt(0)=e$, $\al(1)=e'$ and $\bt(1)=e''$ which are $\ka$-pathes by assumption. If $p(e')= p(e'')$ then $p\circ\al=p\circ\bt$ while $\al\neq \bt$. Hence $p(e')\neq p(e'')$.\\
(iii) Define $\al:[0,1]_{\Z}\lo B$ by $\al(0)=p(e)$ and $\al(1)=p(e')$ which is a $\la$-path. By path lifting property, there is a lifting $\wt{\al}:[0,1]_{\Z}\lo E$ beginning at $e$. Since $p\circ\wt{\al}=\al$, $\wt{\al}(1)\in p^{-1}(\al(1))=p^{-1}(p(e'))$. Now it is sufficient to let $e''=\wt{\al}(1)$ because $\wt{\al}(0)\stk\wt{\al}(1)$. Uniqueness comes from part (ii).\\
(iv) The proof is similar to the proof of (iii).\\
(v) This is also similar to (iii) because maps with path lifting property are surjective.
\end{proof}
\begin{definition}
Let $\peb$ be a $\kl$-continuous map and $e\in E$. We say that $e$ is a conciliator point for $p$ if there exist $e',e''\in N_{\ka}(e,1)$ for which $e'\stackrel{\ka}{\nleftrightarrow}e''$ and $p(e')\stackrel{\la}{\leftrightarrow}p(e'')$.
\end{definition}
In General Topology, every covering map has u.p.l but every map with u.p.l is not generally a covering map \cite{Sp}. In fact, the domain and codomain in the maps with u.p.l that are not a covering map have locally complicated behaviors and this will ruin it to be a covering map. We show that u.p.l is enough for a map without conciliator point to be a digital covering map.
\begin{theorem}\label{t1}
A $\kl$-continuous surjection map $\peb$ is a digital $\kl$-covering if it has u.p.l and has no conciliator point.
\end{theorem}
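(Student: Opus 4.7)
The plan is to verify the three conditions of Definition \ref{d1} by taking, for each $b\in B$, the index set $M:=p^{-1}(b)$ with representatives $\{e_i\}_{i\in M}$, and to check that the covering data are provided essentially by Proposition \ref{p1}, with the no-conciliator hypothesis intervening exactly once — in the continuity of the inverse of the local restriction.

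First, I would verify condition (1), namely $p^{-1}(N_{\la}(b,1))=\bigsqcup_{i\in M}N_{\ka}(e_i,1)$. The inclusion $\supseteq$ is just $(\ka,\la)$-continuity of $p$ together with the fact that $p(e_i)=b$. For $\subseteq$, take $e\in p^{-1}(N_{\la}(b,1))$. If $p(e)=b$, then $e\in p^{-1}(b)$ and $e\in N_{\ka}(e,1)$. Otherwise $p(e)\stl b$, and Proposition \ref{p1}(iv) supplies a unique $e'\in p^{-1}(b)$ with $e\stk e'$, so $e\in N_{\ka}(e',1)$.

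Next, for condition (2), suppose $e\in N_{\ka}(e_i,1)\cap N_{\ka}(e_j,1)$ with $i\neq j$. Because $e_i\neq e_j$ both lie in $p^{-1}(b)$, part (i) of Proposition \ref{p1} forbids $e_i\stk e_j$, so the cases $e=e_i$ or $e=e_j$ are ruled out; the remaining case, $e\stk e_i$ and $e\stk e_j$ with $e_i\neq e_j$, contradicts part (ii). Hence the union is disjoint.

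Finally, for condition (3), fix $e_i$ and consider $p|_{N_{\ka}(e_i,1)}:N_{\ka}(e_i,1)\lo N_{\la}(b,1)$. Continuity is automatic. Surjectivity onto $N_{\la}(b,1)$ is immediate from Proposition \ref{p1}(v) applied with starting point $e_i$. Injectivity follows from (i) (if one argument is $e_i$) and (ii) (if both are $\ka$-adjacent to $e_i$). The main obstacle, and the only place the hypothesis is used, is continuity of the inverse: given $b_1,b_2\in N_{\la}(b,1)$ with $b_1\stl b_2$ and preimages $e_1,e_2\in N_{\ka}(e_i,1)$, I must show $e_1\stk e_2$. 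Since $b_1\neq b_2$, certainly $e_1\neq e_2$. If $e_1=e_i$ (or $e_2=e_i$), then $e_2\in N_{\ka}(e_i,1)\setminus\{e_i\}$ forces $e_2\stk e_i=e_1$ and we are done. In the remaining case both $e_1,e_2$ are $\ka$-adjacent to $e_i$, and if $e_1\not\stk e_2$ then $e_i$ would be a conciliator point (two non-$\ka$-adjacent neighbors whose images are $\la$-adjacent), contradicting the hypothesis. Thus $e_1\stk e_2$, the inverse is $(\la,\ka)$-continuous, and the restriction is a $(\ka,\la)$-isomorphism, completing the proof that $p$ is a $\kl$-covering.
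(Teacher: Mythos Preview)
Your proof is correct and follows essentially the same route as the paper's: you verify the three conditions of Definition~\ref{d1} using Proposition~\ref{p1} for (1), (2), and the bijectivity part of (3), and you invoke the no-conciliator hypothesis exactly where the paper does, for the $(\la,\ka)$-continuity of the inverse restriction. If anything, your treatment of condition (2) and of the edge case $e_1=e_i$ in the inverse-continuity argument is slightly more careful than the paper's, which tacitly assumes $x\stk e_i$ and $x\stk e_j$ without ruling out $x=e_i$ or $x=e_j$.
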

\begin{proof}
Let $b\in B$ and $e\in p^{-1}(N_{\la}(b,1))$. We show that $e\in\D\bigsqcup_{j\in J} N_{\ka}(e_j,1)$, where $p^{-1}(b)=\{e_j\}_{j\in J}$. If $e\in p^{-1}(b)$, then $e$ is one of the $e_j$'s and assertion is obvious.
If $e\notin p^{-1}(b)$, then $p(e)\stl b$ and by Proposition \ref{p1}, there is $e_j\in p^{-1}(b)$ such that $e\stk e_j$ which implies $e\in N_{\ka}(e_j,1)$, as desired. If $e\in\D\bigsqcup_{j\in J} N_{\ka}(e_j,1)$, then there is $j_0\in J $ such that $e\in N_{\ka}(e_{j_0},1)$ and hence either $e=e_{j_0}$ or $e\stk e_{j_0}$ which implies that $p(e)=b$ or $p(e)\stl b$. This means $p(e)\in N_{\la}(b,1)$ and therefore $e\in p^{-1}(N_{\la}(b,1))$.

Let $x\in N_{\ka}(e_i,1)\cap N_{\ka}(e_j,1)$, for $i\neq j$. Then $x\stk e_i$ and $x\stk e_j$ and by Proposition \ref{p1}, $b=p(e_i)\neq p(e_j)=b$ which is contradiction.

For every $j\in J$, the restriction map $p|_{N_{\ka}(e_j,1)}:N_{\ka}(e_j,1)\rightarrow N_{\la}(b,1)$ is injective by Proposition \ref{p1}, part i and ii and also is surjective by part v. For continuity of $(p|_{N_{\ka}(e_j,1)})^{-1}$, let $b',b''\in N_{\la}(b,1)$ be two $\la$-adjacent points. Since $p|_{N_{\ka}(e_j,1)}$ is bijective, there are $e',e''\in N_{\ka}(e_j,1)$ such thet $p(e')=b'$ and $p(e'')=b''$. If $e'\stackrel{\ka}{\nleftrightarrow}e''$ then $e_j$ is conciliator point of $p$ which is contradiction. Hence $e'\stackrel{\ka}{\leftrightarrow}e''$ and so $(p|_{N_{\ka}(e_j,1)})^{-1}$ is continuous.

\end{proof}

In the following, we give an example of a continuous surjection map with u.p.l  in which is not a digital covering map. This shows that the absence of conciliator points is essential.
\begin{example}
Let $E=\mathbb{Z}$, $B=\{b_0=(0,0), b_1=(1,0), b_2=(0,1)\}$ and $p:E\longrightarrow B$ be defined by $p(i)=b_{i~mod~3}$.
Then $p$ is a $(2, 8)$-continuous surjection that has the unique path lifting property and Also $p$ has some conciliator point, for example $0, 3, 6, . . .$. But $p$
is not a digital $(2, 8)$-covering because $p$ does not satisfy condition (3) of Definition \ref{d1}: For example,
 $N_{2}(0,1) = \{-1, 0, 1\}$ and so the inverse of the restriction of $p$ to $N_{2}(0,1)$ is
not $(8, 2)$-continuous, because it maps two $8$-adjacent points $b_1$ and $b_2$ in $N_{8}(b_0,1) = B$ to two distinct points of
$N_{2}(0,1) = \{–1, 0, 1\}$ that are not $2$-adjacent.
\end{example}

%\\\\\\\\\\\\\\\\\\\\\\\\\\\\\\\\\\\\\\\\\\\\\\\\\\\\\\\\\\\\\\\\\\\\\\\\\\\\\\\
%==================================================================================
%/////////////////////////////////////////////////////////////////////////////////
\section{Coverings are derived from local isomorphisms }
Like what happened to covering maps and u.p.l in General Topology, every covering map is a local isomorphism, but every local isomorphism is not necessarily a covering map. Obviously and by definitions, every digital covering map is a local isomorphism. Han \cite{H3}  gave an example showing that a local isomorphism is not a covering map. We will find a gap in his example and will show that in Digital Topology, local isomorphisms are digital covering maps.
\begin{example}\cite{H3}
Assume $$X=\{q_0=(x_1,y_1),q_1=(x_1-1,y_1+1),q_2=(x_1-2,y_1+1),q_3=(x_1-3,y_1),$$
$$q_4=(x_1-3,y_1-1),q_5=(x_1-2,y_1-2),q_6=(x_1-1,y_1-1)\}\sub (\Z^2,8)$$ and
$$Y=\{v_0=(a,b),v_1=(a-1,b+1),v_2=(a-2,b),v_3=(a-1,b-1)\}\sub (\Z^2,8).$$
 Han claimed that the map $h:X\lo Y$ with $h(q_i)=v_{i(mod\ 4)}$ is a local $8$-isomorphism and is not a $(8,8)$-covering map because the third assumption in the definition of covering map is not satisfied in point $v_0$. Although his assertion about the point $v_0$ is true, but $h$ is not $8$-continuous and hence is not a local isomorphism. In fact $q_0$ and $q_6$ are $8$-adjacent, while $h(q_0)$ and $h(q_6)$ are not same or $8$-adjacent.
\end{example}
\begin{theorem}\label{t2}
A $\kl$-continuous surjection map $\peb$ is a digital $\kl$-covering map if and only if it is a local isomorphism.
\end{theorem}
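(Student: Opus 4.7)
The plan is to prove both implications of the equivalence. The forward direction (covering $\Rightarrow$ local isomorphism) is essentially immediate from Definition \ref{d1}: given $e\in E$ and $b=p(e)$, conditions (1) and (2) put $e$ into exactly one of the disjoint sheets $N_\ka(e_i,1)$ covering $p^{-1}(N_\la(b,1))$; since $p|_{N_\ka(e_i,1)}$ is injective by (3) and $p(e_i)=b=p(e)$, we get $e=e_i$, so $p|_{N_\ka(e,1)}$ coincides with the $\kl$-isomorphism onto $N_\la(p(e),1)$ supplied by (3).

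For the converse, assume $\peb$ is a local $\kl$-isomorphism, fix $b\in B$, write $p^{-1}(b)=\{e_i\}_{i\in M}$, and verify (1), (2), (3) of Definition \ref{d1} in turn. Condition (3) is built in: the local-isomorphism hypothesis at each $e_i$ gives the required restriction-isomorphism $p|_{N_\ka(e_i,1)}\lo N_\la(b,1)$. For (1), the inclusion $\bigsqcup_{i\in M} N_\ka(e_i,1)\sub p^{-1}(N_\la(b,1))$ is immediate because each sheet maps into $N_\la(b,1)$. For the reverse inclusion take $e\in p^{-1}(N_\la(b,1))$: if $p(e)=b$ then $e$ is itself some $e_i$; otherwise $p(e)\stl b$, so $b\in N_\la(p(e),1)$, and the isomorphism $p|_{N_\ka(e,1)}\lo N_\la(p(e),1)$ produces a unique $e'\in N_\ka(e,1)$ with $p(e')=b$. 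That $e'$ must be one of the $e_i$, and $e\in N_\ka(e_i,1)$ follows since $e\stk e_i$ (or $e=e_i$).

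Condition (2) I would prove by contradiction. Suppose $x\in N_\ka(e_i,1)\cap N_\ka(e_j,1)$ with $i\ne j$. A short case split, on whether $x$ equals $e_i$, equals $e_j$, or is $\ka$-adjacent to both, shows that in every case $e_i$ and $e_j$ lie together inside a single $1$-neighborhood on which $p$ is injective, whence $p(e_i)=b=p(e_j)$ forces $e_i=e_j$, a contradiction. The main obstacle I anticipate is the degenerate subcase $x=e_i$: here one must notice that $e_i\in N_\ka(e_j,1)$ together with $e_i\ne e_j$ forces $e_i\stk e_j$, so $\{e_i,e_j\}\sub N_\ka(e_j,1)$, and then injectivity of $p|_{N_\ka(e_j,1)}$ closes the argument. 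Once this subtlety is handled, the rest is routine unfolding of the local-isomorphism hypothesis against Definition \ref{d1}; no appeal to u.p.l.\ or conciliator points is required.
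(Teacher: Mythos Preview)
Your argument is correct, and it takes a genuinely different route from the paper's. The paper does not verify Definition~\ref{d1} directly; instead it invokes Theorem~\ref{t1}. Concretely, the paper shows that a local $(\ka,\la)$-isomorphism (a) has the path lifting property (by inductively pulling back each step of a $\la$-path through the bijection $p|_{N_\ka(e_{i-1},1)}$), (b) has uniqueness of path lifts (if $\al(l-1)=\bt(l-1)$ but $\al(l)\ne\bt(l)$, injectivity of $p|_{N_\ka(\al(l-1),1)}$ forces $p\circ\al(l)\ne p\circ\bt(l)$), and (c) has no conciliator point (otherwise the inverse of some $p|_{N_\ka(e,1)}$ would fail to be $(\la,\ka)$-continuous). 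Theorem~\ref{t1} then delivers the covering conclusion.

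Your approach is more elementary and self-contained: you read conditions (1)--(3) straight off the local-isomorphism hypothesis applied at $e$, at $e_i$, and at the putative intersection point $x$, never touching u.p.l.\ or conciliator points. This is shorter and avoids the detour through Proposition~\ref{p1}. The paper's route, by contrast, earns an extra dividend: it records explicitly that local isomorphisms enjoy u.p.l., a fact the paper reuses later (e.g., in the proof of Theorem~\ref{T4}, where liftings are manipulated freely). So the paper's indirection is partly a narrative choice that ties the section back to the u.p.l.\ framework developed in \S3, whereas your direct verification would stand alone but would not, by itself, supply that side result.
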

\begin{proof}
By Theorem \ref{t1} it suffices to show that every local isomorphism has u.p.l and has no conciliator point.
Let $\al:[0,m]_{\Z}\lo B$ be a digital $\la$-path with $b_0=\al(0)$. Since $p$ is surjective, there exists $e_0\in p^{-1}(b_0)$ and by assumption, $p|_{N_{\ka}(e_0,1)}:N_{\ka}(e_0,1)\rightarrow N_{\la}(b_0,1)$ is a $\kl$-isomorphism. Since $\al(0)\stl \al(1)$, there exists $e_1\in p^{-1}(\al(1))$ such that $e_1\in N_{\ka}(e_0,1)$. If $\al(0)= \al(1)$, put $e_1=e_0$. Inductively, there is $e_i\in p^{-1}(\al(i))$ such that $e_i\in N_{\ka}(e_{i-1},1)$, for any $0< i\leq m$. Now, define $\wt{\al}:[0,m]_{\Z}\lo E$ by $\wt{\al}(i)=e_i$ which is a $\ka$-path because $\wt{\al}(i)=e_i\stk e_{i-1}=\wt{\al}(i-1)$. Since $e_i\in p^{-1}(\al(i))$, $\wt{\al}$ is a lifting of $\al$.

For uniqueness of path lifts property, consider two paths
$\al,\beta:[0,m]_{\Z}\lo E$ in which $p\circ\al=p\circ\beta$ and $\al(0)=\beta(0)$. By contradiction, assume that $\al\neq\bt$. Since $\al(0)=\beta(0)$, the set $\{i\in[0,m]; \al(i)\neq\bt(i)\}$ has minimum $l$. Hence $\al(l)\neq \bt(l)$, while $\al(l-1)=\bt(l-1)$. But the map
$$p|_{N_{\ka}(\al(l-1),1)}:N_{\ka}(\al(l-1),1)\rightarrow N_{\la}(p\circ\al(l-1),1)$$
 is an isomorphism which implies that $p\circ\al(l)\neq p\circ\bt(l)$, for $\al(l),\bt(l)\in N_{\ka}(\al(l-1),1) $. This contradicts $p\circ\al=p\circ\beta$ and therefore $\al=\bt$.\\
 Now, let $e\in E$ be a conciliator point of $p$. Then, there are $e',e''\in N_{\ka}(e,1)$ such that $e'\stackrel{\ka}{\nleftrightarrow}e''$ and $p(e')\stackrel{\la}{\leftrightarrow}p(e'')$ which implies that the restricted map $p|_{N_{\ka}(e,1)}:N_{\ka}(e,1)\longrightarrow N_{\la}(p(e),1)$ is not an isomorphism. This is a contradition and hence $p$ has no conciliator point.
\end{proof}

If in the definition of digital covering map, we replace the $N_{\la}(b, 1)$ by $N_{\la}(b, n)$, for $n\in \N$, the map is called \textbf{radius $n$ covering map} and hence it is radius $n$ local isomorphism. Radius $n$ coverings, particulary radius 2 coverings are very important in the digital covering theory because some essential theorems in Algebraic Topology are satisfied in Digital Topology if covering maps will be radius 2 covering maps.
\begin{theorem}\cite{B3,B4,H2,H4}\label{T3}
Let $p : (E,\ka)\lo (B,\la)$ be a $(\ka,\la)$-covering map such that $p(e_0) = b_0$. Suppose
that $p$ is a radius 2 local isomorphism. Then\\
(1) For $\ka$-paths $\al,\bt : [0,m]_{\Z} \lo E$ starting at $e_0$,
if there is a $\la$-homotopy in $B$ from $p\circ\al$ to $p\circ\bt$ that holds the endpoints fixed, then
$\al(m)=\bt(m)$, and there is a $\ka$-homotopy in $E$ from $\al$ to $\bt$ that holds the endpoints
fixed.\\
(2) The induced homomorphism $p_*:\pi_1^{\ka}(E,e_0)\lo \pi_1^{\la}(B,b_0)$ is a monomorphism.\\
(3) For a given $\ka'$-connected space $X$ with $x_0\in X$, any $(\ka',\la)$-continuous map $\vf:(X,\ka')\lo (B,\la)$ with $\vf(x_0)=b_0$ has a digital lifting $\wt{\vf}:(X,\ka')\lo (E,\ka)$ for which $\wt{\vf}(x_0)=e_0$ if and only if $\vf_*\big(\pi_1^{\ka'}(X,x_0)\big)\sub p_*\big(\pi_1^{\ka}(E,e_0)\big)$.
\end{theorem}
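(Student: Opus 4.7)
The plan is to establish (1) first as a digital homotopy lifting property, and then derive (2) and (3) from it. Part (2) is a direct application of (1) to a constant loop, while (3) uses the classical path-lifting recipe with well-definedness verified via (1). The main technical step, and the only place where the radius 2 hypothesis plays an essential role, is the verification of spatial continuity of the lifted homotopy in (1).

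For (1), given a $\la$-homotopy $H:[0,m]_{\Z}\times[0,n]_{\Z}\lo B$ from $p\circ\al$ to $p\circ\bt$ that holds the endpoints fixed, I would set $\wt{H}(i,\cdot)$ to be the unique lift (via Theorem \ref{t01}) of the $\la$-path $s\mapsto H(i,s)$ starting at $\al(i)$. Time-continuity and $\wt{H}(\cdot,0)=\al$ are then automatic. The crux is spatial continuity, which I would prove by induction on $s$: assuming that for consecutive $i,i+1$ either $\wt{H}(i,s)=\wt{H}(i+1,s)$ or $\wt{H}(i,s)\stk\wt{H}(i+1,s)$, I claim the same holds at time $s+1$. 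Indeed, the four values $H(i,s), H(i+1,s), H(i,s+1), H(i+1,s+1)$ all lie in $N_{\la}(H(i,s),2)$, which by the radius 2 hypothesis lifts isomorphically to $N_{\ka}(\wt{H}(i,s),2)$; the lifts $\wt{H}(i,s+1)$ and $\wt{H}(i+1,s+1)$ are forced to be the images of $H(i,s+1)$ and $H(i+1,s+1)$ under the inverse of this local isomorphism, so their $\ka$-adjacency transfers from the $\la$-adjacency below. A radius 1 neighborhood would not suffice, since $\wt{H}(i+1,s+1)$ can lie at distance $2$ from $\wt{H}(i,s)$. The endpoint-fixing hypothesis forces the column at $i=m$ to lift to a constant path at $\al(m)$, yielding $\al(m)=\bt(m)$, and uniqueness of path lifts then identifies $\wt{H}(\cdot,n)$ with $\bt$.

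For (2), if $p_*[\al]$ is trivial then $p\circ\al$ is $\la$-homotopic rel endpoints to $p\circ\ov{e_0}$, and (1) applied with $\bt=\ov{e_0}$ produces a $\ka$-homotopy between $\al$ and $\ov{e_0}$ rel endpoints. For (3), the forward implication is the functorial identity $\vf_*=p_*\circ\wt{\vf}_*$. For the converse, I would select for each $x\in X$ a $\ka'$-path $\sigma_x$ from $x_0$ to $x$ and define $\wt{\vf}(x)$ to be the endpoint of the unique lift of $\vf\circ\sigma_x$ starting at $e_0$. Independence of $\sigma_x$ reduces to showing that for every $\ka'$-loop $\gamma$ at $x_0$, the path $\vf\circ\gamma$ lifts to a loop: the subgroup hypothesis gives $[\vf\circ\gamma]=p_*[\delta]$ for some $\ka$-loop $\delta$ at $e_0$, so $\vf\circ\gamma$ is $\la$-homotopic rel endpoints to $p\circ\delta$, and applying (1) shows that the lift of $\vf\circ\gamma$ ends where $\delta$ does, namely at $e_0$. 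Continuity of $\wt{\vf}$ is then handled by choosing $\sigma_{x'}=\sigma_x*[x,x']$ whenever $x$ and $x'$ are $\ka'$-adjacent, so that the two lifts share their initial segment and differ by a single lifting step. The hard part throughout is the spatial-continuity induction in (1), where the $2\times 2$ square appearing in the inductive step dictates the radius 2 hypothesis.
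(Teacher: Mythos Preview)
The paper does not supply its own proof of this theorem: the statement is quoted from the literature (Boxer, Boxer--Karaca, Han) and is used as a black box in the subsequent corollaries. So there is no in-paper argument to compare against.

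Your sketch is essentially the classical covering-space argument transported to the digital setting, and it is the argument one finds in the cited sources. The homotopy-lifting step in (1) is handled correctly: lifting the homotopy column-by-column via unique path lifting, then propagating spatial continuity by induction on the time coordinate using that the $2\times 2$ block $\{H(i,s),H(i+1,s),H(i,s+1),H(i+1,s+1)\}$ sits inside a single $N_{\la}(\cdot,2)$, hence inside a single sheet of the radius~$2$ local isomorphism. Your remark that radius~$1$ would not suffice because $\wt H(i+1,s+1)$ may lie at $\ka$-distance $2$ from $\wt H(i,s)$ is exactly the point. Parts (2) and (3) are then standard consequences, and your path-extension argument for the continuity of $\wt\vf$ is the right one.

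One small caveat worth making explicit in a full write-up: the digital fundamental group is defined via \emph{trivial extensions}, so ``$p_*[\al]$ trivial'' or ``$[\vf\circ\gamma]=p_*[\delta]$'' means equality after possibly padding the loops with constant segments to a common length. You should note that the unique lift of a trivial extension of $p\circ\al$ is the corresponding trivial extension of $\al$ (constant segments lift to constant segments by uniqueness), so that invoking (1) on the padded loops still yields the conclusion for the original homotopy classes. This is routine but is the one place where the digital bookkeeping differs from the topological case.
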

For more results in digital covering maps based on 2-radius  property, see \cite{B3,B4,H2}. In the following, we give a loop criterion for a digital covering to be a radius $n$ covering map. But we need this lemma.
\begin{lemma}\label{L1}
Let $p : (E,\ka)\lo (B,\la)$ be a $(\ka,\la)$-covering map, $e\in E$, $e'\in N_{\ka}(e,1)$ and $e'\neq e''\in N_{\ka}(e,2)$. Then $p(e')\neq p(e'')$.
\end{lemma}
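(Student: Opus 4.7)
The plan is to argue by contradiction: assume $p(e')=p(e'')$ and produce two distinct $\ka$-paths in $E$ that both lift the same $\la$-path from $B$ and begin at the same point, contradicting the unique path lifting guaranteed by Theorem~\ref{t01}. To set the stage, let $b=p(e)$ and invoke the elementary decomposition $p^{-1}(N_{\la}(b,1))=\D\bigsqcup_{i\in M}N_{\ka}(e_i,1)$; since $p$ restricted to any sheet is injective, the point $e$ itself must be one of the $e_i$'s, say $e=e_{i_0}$, so that $N_{\ka}(e,1)$ coincides with the sheet $N_{\ka}(e_{i_0},1)$.

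First I would dispatch the easy case $e''\in N_{\ka}(e,1)$: then $e'$ and $e''$ both lie in the sheet $N_{\ka}(e_{i_0},1)$ on which $p$ is an isomorphism, so $p(e')=p(e'')$ forces $e'=e''$, contradicting the hypothesis. The substantive case is $e''\notin N_{\ka}(e,1)$, which means $l_{\ka}(e,e'')=2$; I pick an intermediate point $e_1$ with $e\stk e_1\stk e''$ (necessarily $e_1\neq e$ and $e_1\neq e''$). The central step is to show $e'\in N_{\ka}(e_1,1)$. This is trivial when $e'\in\{e,e_1\}$. Otherwise $e'$ and $e_1$ are distinct elements of the sheet $N_{\ka}(e_{i_0},1)$; injectivity on this sheet forbids $p(e_1)=p(e')$ (which equals $p(e'')$ by assumption), leaving $p(e_1)\stl p(e')$, and then the $(\la,\ka)$-continuity of $(p|_{N_{\ka}(e_{i_0},1)})^{-1}$ provided by Definition~\ref{d1}(3) forces $e_1\stk e'$.

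With $e'\in N_{\ka}(e_1,1)$ in hand, the conclusion is immediate: the $\la$-path $\al:[0,2]_{\Z}\lo B$ defined by $\al(0)=b$, $\al(1)=p(e_1)$, $\al(2)=p(e'')$ admits the two $\ka$-liftings $(e,e_1,e'')$ and $(e,e_1,e')$ in $E$, both beginning at $e$ but ending at the distinct points $e''$ and $e'$, contradicting Theorem~\ref{t01}. I expect the delicate point to be the verification that $e'$ and $e_1$ are $\ka$-adjacent in the generic sub-case, since it requires first ruling out the collapse $p(e_1)=p(e')$ and only then leveraging the continuity of the inverse isomorphism on the sheet through $e$.
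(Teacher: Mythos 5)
Your argument is correct and follows essentially the same route as the paper's: both reduce to the case $l_{\ka}(e,e'')=2$, introduce an intermediate point $e_1$ (the paper's $f$) adjacent to both $e$ and $e''$, and use the sheet isomorphism at $e$ to get $e'\in N_{\ka}(e_1,1)$. The only difference is the endgame --- the paper concludes directly from the injectivity of $p|_{N_{\ka}(e_1,1)}$ applied to $e',e''\in N_{\ka}(e_1,1)$ with $p(e')=p(e'')$, while you take a slightly longer detour through unique path lifting (Theorem \ref{t01}); your explicit treatment of the degenerate subcases $e'\in\{e,e_1\}$ is in fact a bit more careful than the paper's.
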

\begin{proof}Let $b=p(e)$ and by contrary assume that $b':=p(e')= p(e'')$.
 Since $p|_{N_{\ka}(e,1)}$ is an isomorphism, $e''\in {N_{\ka}(e,2)}-{N_{\ka}(e,1)}$
and hence there exists $f\in N_{\ka}(e,1)$ such that $e''\stk f$. Since $p(e')=p(e'')\stk p(f)$ and $p|_{N_{\ka}(e,1)}$ is an isomorphism and also $p(e'), p(f)\in N_{\la}(b,1)$, we have $f\stk e'$. Now, $p|_{N_{\ka}(f,1)}$ is an isomorphism, $e',e''\in N_{\ka}(f,1)$ and $p(e')=p(e'')$ which is a contradiction.
\end{proof}

\begin{theorem}\label{T4}
Let $p : (E,\ka)\lo (B,\la)$ be a $(\ka,\la)$-covering map. $p$ is a radius $n$ covering map if and only if every lifting of any simple $\la$-loop with length at most $2n+1$ is a simple $\ka$-loop.
\end{theorem}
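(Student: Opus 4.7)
The plan is to treat the two directions separately: the forward implication is a direct application of the radius-$n$ isomorphism, while for the backward implication I will argue by contrapositive, converting a failure of the radius-$n$ property into a short simple loop with a badly behaved lift.

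For the forward direction, let $p$ be a radius $n$ covering and let $\al:[0,m]_{\Z}\lo B$ be a simple $\la$-loop of length $m\leq 2n+1$ with lift $\wt{\al}$ at some $e_0\in p^{-1}(\al(0))$. Simplicity forces $d_{\la}(\al(0),\al(i))\leq\min(i,m-i)\leq\lfloor m/2\rfloor\leq n$ for each $i$, so $\al([0,m]_{\Z})\sub N_{\la}(\al(0),n)$. Since $q:=p|_{N_{\ka}(e_0,n)}$ is a $\kl$-isomorphism onto $N_{\la}(\al(0),n)$, the composite $q^{-1}\circ\al$ is a $\ka$-path in $N_{\ka}(e_0,n)$ starting at $e_0$ and projecting to $\al$. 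By the uniqueness part of Theorem \ref{t01} it must coincide with $\wt{\al}$, and bijectivity of $q^{-1}$ immediately transfers simplicity and loop-closure from $\al$ to $\wt{\al}$.

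For the backward direction, suppose the loop criterion holds but $p$ is not a radius $n$ covering, and fix $e\in E$ at which $q:=p|_{N_{\ka}(e,n)}$ fails to be a $\kl$-isomorphism onto $N_{\la}(p(e),n)$. Surjectivity of $q$ is automatic (lift any shortest $\la$-path to each target $b'\in N_{\la}(p(e),n)$ through $p$), so the failure must be either case (A): $q$ is not injective, or case (C): $q$ is bijective but $q^{-1}$ is not $(\la,\ka)$-continuous. In case (A), pick $e_1\neq e_2\in N_{\ka}(e,n)$ with $p(e_1)=p(e_2)$, shortest $\ka$-paths $\gamma_i$ from $e$ to $e_i$ of lengths $l_i\leq n$, and set $\al=(p\circ\gamma_1)*(p\circ\gamma_2)^{-1}$, a $\la$-loop at $p(e)$ of length $l_1+l_2\leq 2n$. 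In case (C), pick $b_1\stl b_2$ in $N_{\la}(p(e),n)$ whose preimages $e_1,e_2\in N_{\ka}(e,n)$ satisfy $e_1\neq e_2$ and $e_1\stackrel{\ka}{\nleftrightarrow}e_2$, take shortest $\ka$-paths $\wt{\beta}_i$ from $e$ to $e_i$, and set $\al=(p\circ\wt{\beta}_1)*(b_1,b_2)*(p\circ\wt{\beta}_2)^{-1}$, of length at most $2n+1$. A uniqueness-of-path-lifts argument (Theorem \ref{t01}) then shows the lift $\wt{\al}$ at $e$ cannot return to $e$: closure would force $e_1=e_2$ in case (A), and $e_1\stk e_2$ in case (C), each contradicting our choice.

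It remains to replace $\al$ by a \emph{simple} loop of length $\leq 2n+1$ that still admits a non-closed lift somewhere. If $\al$ is not simple, pick $0\leq s<t\leq m$ with $\al(s)=\al(t)$ and $(s,t)\neq(0,m)$; inspecting the given lift $\wt{\al}$, if $\wt{\al}(s)=\wt{\al}(t)$ I replace $\al$ by the shorter loop $\al|_{[0,s]}*\al|_{[t,m]}$ (whose lift $\wt{\al}|_{[0,s]}*\wt{\al}|_{[t,m]}$ at $e$ ends at $\wt{\al}(m)\neq e$), and if $\wt{\al}(s)\neq\wt{\al}(t)$ I replace $\al$ by the shorter loop $\al|_{[s,t]}$ (whose lift $\wt{\al}|_{[s,t]}$ at $\wt{\al}(s)$ has distinct endpoints). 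Iterating must terminate in a simple $\la$-loop of length $\leq 2n+1$ whose lift at some fibre point is not a loop, contradicting the hypothesis. The main obstacle I expect is precisely this reduction step: I must verify that the ``some lift is non-closed'' property propagates through every cut-and-paste, and that the one-edge piece $(b_1,b_2)$ in case (C) interacts correctly with any cut that happens to fall on it.
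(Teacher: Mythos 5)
Your forward direction is correct and in fact cleaner than the paper's: the observation that simplicity forces $\al([0,m]_{\Z})\sub N_{\la}(\al(0),n)$ when $m\le 2n+1$, followed by identifying the lift with $(p|_{N_{\ka}(e_0,n)})^{-1}\circ\al$ via uniqueness, is exactly the right argument. Your backward direction, however, takes a genuinely different route from the paper and has a real gap in its final step. The paper proves the converse by induction on the radius: assuming $p|_{N_{\ka}(e,n-1)}$ is already an isomorphism, it establishes surjectivity, injectivity and continuity of the inverse for $p|_{N_{\ka}(e,n)}$ by exhibiting, for each potential failure, an \emph{explicitly simple} $\la$-loop of length $4$ or $5$ (e.g.\ $x,b_1,b,b_1',x$, where non-adjacency of the non-consecutive pairs is verified from the already-established smaller-radius isomorphism) whose lift is visibly not closed. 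Your contrapositive argument instead produces a possibly non-simple loop of length at most $2n+1$ with a non-closed lift and then tries to repair simplicity by surgery.

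The gap is in that repair. In this paper a simple $\la$-loop is a parametrization of $SC_{\la}^{n,m}$: one needs $\al(i)$ and $\al(j)$ adjacent \emph{if and only if} $j=i\pm 1 \bmod m$. Your cut-and-paste only removes repeated vertices ($\al(s)=\al(t)$); it does nothing about \emph{chords}, i.e.\ non-consecutive indices $s,t$ with $\al(s)\stl\al(t)$. An injective loop with a chord is not simple, so your iteration need not terminate at a loop to which the hypothesis applies. The defect is fixable by an additional surgery: given a chord $\al(s)\stl\al(t)$, let $f$ be the unique preimage of $\al(t)$ in $N_{\ka}(\wt{\al}(s),1)$; if $f=\wt{\al}(t)$ replace $\al$ by the shortcut $\al|_{[0,s]}*(\al(s),\al(t))*\al|_{[t,m]}$, and otherwise by $\al|_{[s,t]}*(\al(t),\al(s))$ — in either case one checks, exactly as in your two cut cases, that some lift of the new, strictly shorter loop fails to close. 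You also need to rule out termination at length $2$ (such loops always lift closed under a covering map), and to note that a non-closed lift is in particular not a simple $\ka$-loop, so it does violate the hypothesis. With the chord surgery added your argument goes through and is arguably more conceptual than the paper's induction, but as written the claim ``iterating must terminate in a simple $\la$-loop'' is not justified — which you yourself flagged as the main obstacle.
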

\begin{proof}
Let $p$ be a radius $n$ covering map and in the worst conditions, $\al:[0,2n+1]_{\Z}\lo B$ be a simple $\la$-loop with length $2n+1$. Let $b:=\al(0)$ and assume that $e\in p^{-1}(b)$. Consider two $\la$-paths $\al_1,\al_2:[0,n]_{\Z}\lo B$ defined by $\al_1(k)=\al(k)$ and $\al_2(k)=\al(2n+1-k)$. In fact, $\al_1$ is $\al|_{[0,n]}$ and $\al_2$ is $(\al|_{[n+1,2n]})^{-1}$. Since $p|_{N_{\ka}(e,n)}:N_{\ka}(e,n)\lo N_{\la}(b,n)$ is a $(\ka,\la)$-isomorphism, $\wt{\al_1}:=(p|_{N_{\ka}(e,n)})^{-1}\circ\al_1$ and $\wt{\al_2}:=(p|_{N_{\ka}(e,n)})^{-1}\circ\al_2$ are liftings of $\al_1$ and $\al_2$, respectively and $\wt{\al_1}(0)=\wt{\al_2}(0)=e$. Also, $\wt{\al_1}(n)\stackrel{\ka}{\leftrightarrow}\wt{\al_2}(n)$ because ${\al_1}(n)\stackrel{\ka}{\leftrightarrow}{\al_2}(n)$ and $p|_{N_{\ka}(b,n)}$ is an isomorphism. Define $\mu:[0,2n+1]\longrightarrow E$ by
$$\mu(i)=\begin{cases}
\wt{\al_1}(i) ~~~\qquad~~\qquad~~~~ 0\leq i\leq n,\\
\wt{\al_2}(i) ~~~~\qquad~~~\qquad~~ n+1\leq 2n+1.
\end{cases}$$

Obviously, $\mu$ is a simple $\ka$-loop and $p\circ\mu=\al$, as desired.

For the converse, we use induction to show that $p|_{N_{\ka}(e,n)}$ is an isomorphism, for every $b\in B$ and any $e\in p^{-1}(b)$.

 Assume that all liftings of every simple $\la$-loop with length $5$ is a simple $\ka$-loop. We must prove that $p|_{N_{\ka}(e,2)}$ is an isomorphism, for every $b\in B$ and any $e\in p^{-1}(b)$. Since $p|_{N_{\ka}(e,1)}$ is an isomorphism, if $x\in N_{\la}(b,1)$, Then there is $y\in N_{\ka}(e,1)$ such that $p(y)=x$. Assume $x\in N_{\la}(b,2)-N_{\la}(b,1)$. Let $\al:[0,2]_{\Z}\lo N_{\la}(b,2)$ be a $\la$-path from $b$ to $x$. Then there is a unique lifting $\wt{\al}:[0,2]_{\Z}\lo N_{\ka}(e,2)$ beginning at $e$ such that $y:=\wt{\al}(2)\in N_{\ka}(e,2)$ and $p(y)=x$. Hence $p|_{N_{\ka}(e,2)}$ is onto.

 For injectivity, by contrary assume that there are $y,y'\in N_{\ka}(e,2)$ such that $x:=p(y)=p(y')$. Since $p|_{ N_{\ka}(e,1)}$ is an isomorphism, $y,y'\notin  N_{\ka}(e,1)$. If $y\in  N_{\ka}(e,1)$ and $y'\in  N_{\ka}(e,2)$, then by Lemma \ref{L1}, $p(y)\neq p(y')$.
 Hence we can consider $y,y'\in N_{\ka}(e,2)-N_{\ka}(e,1)$. By part i of Proposition \ref{p1}, $y,y'$ are not $\ka$-adjacent.
  %Since $p|_{N_{\ka}(e,1)}$ is an isomorphism and $p$ do not take the same value in two $\ka$-adjacent point (Proposition \ref{p1}, i), $x\in N_{\la}(b,2)-N_{\la}(b,1)$.
  There are two points $e_1,e_1'\in N_{\ka}(e,1)$ such that $e_1\stk y$ and $e_1'\stk y'$. If $e_1=e_1'$, then we have two liftings beginning at $e_1$ for the path $\delta:[0,1]_{\Z}\lo B$, by $\delta(0)=p(e_1)$ and $\delta(1)=x$ which is contradiction. Hence $e_1\neq e_1'$.

  If $p(e_1)\stl p(e_1')$, then $e_1\stk e_1'$ (because $p|_{N_{\ka}(e,1)}$ is an isomorphism) and since $y\in N_{\ka}(e_1,1)$ and $y'\in N_{\ka}(e_1,2)$, by Lemma \ref{L1} we have $p(y)\neq p(y')$. Therefore $p(e_1)\stackrel{\la}{\nleftrightarrow}p(e_1')$. Let $b_1=p(e_1)$ and $b_1'=p(e_1')$. Define
  $$\begin{cases}
\al:[0,4]_{\Z}\lo B,\\
\al(0)=\al(4)=x,\\
\al(1)=b_1,\al(2)=b,\al(3)=b_1',\end{cases}$$
which is a simple $\la$-loop based at $b$ with length $4$ and hence all of its liftings are closed. But $\gamma:[0,4]_{\Z}\lo E$ defined by $\gamma(0)=y,~\gamma(1)=e_1,~\gamma(2)=e,~\gamma(3)=e_1'$ and $\gamma(4)=y'$ is a lifting of $\al$ which is not closed. Hence $y=y'$ and therefore $p|_{N_{\ka}(e,2)}$ is injective.

 Obviously $p|_{N_{\ka}(e,2)}$ is continuous. For continuity of $\big(p|_{N_{\ka}(e,2)}\big)^{-1}$, let $b',b''\in N_{\la}(b,2)$ such that $b'\stackrel{\la}{\leftrightarrow}b''$. Then\\
 \begin{itemize}
 \item If $b',b''\in N_{\la}(b,1)$, then $\big(p|_{N_{\ka}(e,2)}\big)^{-1}(b')\stackrel{\ka}{\leftrightarrow}\big(p|_{N_{\ka}(e,2)}\big)^{-1}(b'')$ because $p|_{N_{\ka}(e,1)}$ is an isomorphism.
 \item If $b',b''\in N_{\la}(b,2)- N_{\la}(b,1)$, then we can define $\la$-simple loop $\al:[0,5]_{\mathbb{Z}}\longrightarrow N_{\la}(b,2)$
 by $\al(0)=b, \al(1)=b_1,\al(2)=b',\al(3)=b'',\al(4)=b_1'$ and $\al(5)=b$, where $b_1,b_1'\in N_{\la}(b,1)$. By assumption, all of its liftings are closed and by u.p.l, there exists unique $\ka$-simple loop $\wt{\al}$ started at $e$ such that $p\circ\wt{\al}=\al$. Since $p|_{N_{\ka}(e,2)}$ is bijective, $\big(p|_{N_{\ka}(e,2)}\big)^{-1}(b')=\wt{\al}(2)\stackrel{\ka}{\leftrightarrow}\wt{\al}(3)=\big(p|_{N_{\ka}(e,2)}\big)^{-1}(b'')$, as desired.
 \item If $b'\in N_{\la}(b,2)$ and $b''\in N_{\la}(b,1)$, by a similar way as in the previous item, we can define a $\la$-simple loop with length 4 in $N_{\la}(b,2)$ and deduce that $\big(p|_{N_{\ka}(e,2)}\big)^{-1}(b')\stackrel{\ka}{\leftrightarrow}\big(p|_{N_{\ka}(e,2)}\big)^{-1}(b'')$.
  \end{itemize}

 Therefore $p|_{N_{\ka}(e,2)}$ is an isomorphism.

If $p|_{N_{\ka}(e,n-1)}$ is an isomorphism and all liftings of every simple $\la$-loop with length $2n+1$ is a simple $\ka$-loop, a similar method shows that $p|_{N_{\ka}(e,n)}$ is an isomorphism.
\end{proof}
In the following example we show that we can not replace 2n+1 by
2n in the Theorem \ref{T4}.
\begin{example}
Let $\al:[0,5]_{\mathbb{Z}}\longrightarrow \mathbb{Z}^3$ be a 26-simple loop and denote $\al(i)$ by $b_i$. Define $p:\mathbb{Z}\longrightarrow B=\{b_0,b_1,b_2,b_3,b_4\}$ by $p(i)=b_{i~mod~5}$. Readily, $p$ is a (2,26)-covering map. As $B$ is a simple
loop and $|B| > 2n = 4$, there is no non-trivial simple 26-loop in $B$ of length 4 or less and so the hypothesis that "every lifting of any simple 26-loop with length at most $2n$ is a simple 2-loop" is
satisfied. Also, there exist a 26-loop with length 5 such that has no closed lifting, namely $\al$. But $p$ is not a radius 2 covering map as it is not a radius 2 local isomorphism. For example, $N_2(0, 2) = [-2, 2]_{\mathbb{Z}}$ and so $\big(p|_{N_2(0, 2)}\big)^{-1}$ maps the 26-adjacent points $b_2$ and $b_3$ in $N_{26}(b_0,2) = B$ to two distinct points of $N_2(0, 2) = [-2, 2]_{\mathbb{Z}}$ that
 are not 2-adjacent (namely the points 2 and $-2$).
\end{example}

Although in General Topology, the induced homomorphism on fundamental groups by a covering map is a monomorphism, but this is not true in digital topology \cite{B3}. We have the following corollary by using Theorem 4.3 and Theorem 4.5.

\begin{corollary}
Let $p : (E,\ka)\lo (B,\la)$ be a $(\ka,\la)$-covering map. The induced homomorphism $p_*:\pi_1^{\ka}(E,e_0)\lo \pi_1^{\la}(B,b_0)$ is a monomorphism if every lifting of any simple $\la$-loop with length $5$ is closed.
\end{corollary}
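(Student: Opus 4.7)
The plan is to combine the two immediately preceding results: first apply Theorem \ref{T4} with $n=2$ to upgrade $p$ from a $(\ka,\la)$-covering map to a radius $2$ covering map, then invoke Theorem \ref{T3}(2) to read off that $p_*:\pi_1^\ka(E,e_0)\to\pi_1^\la(B,b_0)$ is a monomorphism.

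The only delicate step is verifying that the corollary's hypothesis is strong enough to activate Theorem \ref{T4}. The corollary only asserts that every lifting of any simple $\la$-loop of length $5$ is closed, whereas Theorem \ref{T4} is phrased in terms of such liftings being simple $\ka$-loops. To bridge the gap, I would observe that because $p$ is already a $(\ka,\la)$-covering map, any closed lifting $\wt{\al}$ of a simple $\la$-loop $\al$ is automatically a simple $\ka$-loop: if $\wt{\al}(i)\stk\wt{\al}(j)$ held for non-consecutive indices $i,j$, then continuity of $p$ would force $\al(i)$ and $\al(j)$ to be equal or $\la$-adjacent, contradicting the simplicity of $\al$. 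Thus the corollary's hypothesis upgrades, at no extra cost, to the length-$5$ slice of the hypothesis of Theorem \ref{T4}.

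With this bridging in hand, Theorem \ref{T4} with $n=2$ (whose proof in the paper in fact only uses the length-$5$ simple loop condition to deduce the general length-at-most-$5$ behaviour inside a radius-$2$ neighbourhood) identifies $p$ as a radius $2$ covering map, hence a radius $2$ local isomorphism. The conclusion then follows from Theorem \ref{T3}(2). The expected main obstacle is precisely the bridging observation above; once it is settled, the corollary is essentially a two-line composition of the two previously established theorems.
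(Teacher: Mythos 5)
Your proposal is correct and follows exactly the paper's own route: Theorem \ref{T4} with $n=2$ upgrades $p$ to a radius $2$ covering map, and Theorem \ref{T3}(2) then yields the monomorphism. Your bridging observation --- that a closed lifting of a simple $\la$-loop under a $(\ka,\la)$-covering map is automatically a simple $\ka$-loop, since $\ka$-adjacency of non-consecutive lifted points would push down to $\la$-adjacency or equality of non-consecutive points of the simple base loop --- is a point the paper's proof silently glosses over, and making it explicit is a genuine (if minor) improvement.
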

\begin{proof}
Since every lifting of any simple $\la$-loop with length $5$ is closed, $p$ is a radius 2 covering map by Theorem \ref{T4} and so by part 2
 of Theorem \ref{T3}, $p$ is a monomorphism
\end{proof}
Using Theorem 4.5, we can restate Theorem 4.3:
\begin{corollary}(Digital Lifting Criteria)\label{C1}

Let $p : (E,\ka)\lo (B,\la)$ be a continuous surjection map, $X$ be a $\ka'$-connected space with $x_0\in X$ and $\vf:(X,\ka')\lo (B,\la)$ be a $(\ka',\la)$-continuous map with $\vf(x_0)=b_0$. Then the existence
of a digital lifting $\wt{\vf}:(X,\ka')\lo (E,\ka)$ for which $\wt{\vf}(x_0)=e_0$ is equivalent to the algebraic assumption $\vf_*\big(\pi_1^{\ka'}(X,x_0)\big)\sub p_*\big(\pi_1^{\ka}(E,e_0)\big)$ if at least one of the following conditions hold:\\
(a) $p$ has u.p.l, has no conciliator point and every lifting of any simple $\la$-loop with length $5$ is closed.\\
(b) $p$ is a radius 2 local isomorphism.
\end{corollary}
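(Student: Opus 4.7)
The plan is to reduce both hypotheses (a) and (b) to the setting of Theorem \ref{T3}(3), which is the lifting criterion already established. So the goal is: under either (a) or (b), show that $p$ is a $(\kappa,\lambda)$-covering map that is additionally a radius $2$ local isomorphism, and then quote Theorem \ref{T3}(3) directly.

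First I would handle case (b), which is essentially immediate: if $p$ is a radius $2$ local isomorphism, then in particular it is a (radius $1$) local isomorphism, and by Theorem \ref{t2} every $(\kappa,\lambda)$-continuous surjection that is a local isomorphism is a $(\kappa,\lambda)$-covering map. Thus $p$ is a covering map that is also a radius $2$ local isomorphism, so Theorem \ref{T3}(3) gives the equivalence.

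Next I would handle case (a). Since $p$ is a continuous surjection with u.p.l and no conciliator point, Theorem \ref{t1} immediately yields that $p$ is a $(\kappa,\lambda)$-covering map. Now I invoke Theorem \ref{T4} with $n=2$: the assumption that every lifting of any simple $\lambda$-loop of length at most $5=2\cdot 2+1$ is a simple $\ka$-loop is exactly the hypothesis we have (note that any lifting of a simple loop of length $\le 5$ that is \emph{closed} is automatically simple once we use injectivity of $p$ on small neighbourhoods given by the covering property, so the stated weaker form ``closed'' combined with the covering property yields ``simple''). Hence $p$ is a radius $2$ covering map, in particular a radius $2$ local isomorphism, and again Theorem \ref{T3}(3) applies.

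The main obstacle, and the only nontrivial bookkeeping step, is checking that ``every lifting of a simple $\la$-loop of length $5$ is closed'' as stated in (a) is strong enough to feed into Theorem \ref{T4}, which asks for lifts to be simple $\ka$-loops of length at most $2n+1=5$. This is where I would be careful: once $p$ is known to be a covering map (from Theorem \ref{t1}), u.p.l together with the local injectivity of $p$ on $N_{\ka}(e,1)$ forces a closed lift of a simple $\la$-loop of length $\le 5$ to be simple, because any self-intersection in the lift would project to a self-intersection downstairs via Proposition \ref{p1}. Once this is verified, the reduction to Theorem \ref{T3}(3) is clean and the corollary follows.
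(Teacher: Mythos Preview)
Your proposal is correct and follows essentially the same route as the paper: in both cases you reduce to Theorem~\ref{T3}(3) by first establishing that $p$ is a covering map (via Theorem~\ref{t1} in case (a), via Theorem~\ref{t2} in case (b)) and then that it has the radius~$2$ property (via Theorem~\ref{T4} in case (a), directly in case (b)). The paper does exactly this, only it treats (a) first and then derives (b) from it, whereas you reverse the order; and the paper simply asserts without comment that the ``closed'' hypothesis in (a) suffices to invoke Theorem~\ref{T4}, while you correctly flag the gap between ``closed lift'' and ``simple $\kappa$-loop lift'' and sketch why it closes (any repetition in the lift would project to a repetition in the simple base loop). Your extra care here is warranted, and the argument you give is right --- in fact it is simpler than you suggest, since $p\circ\widetilde{\alpha}=\alpha$ alone forces injectivity of the lift from injectivity of $\alpha$, without needing Proposition~\ref{p1}.
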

\begin{proof}
(a) If $p$ has u.p.l and has no conciliator point, then it is a covering map, by Theorem \ref{t1} and since every lifting of any simple $\la$-loop with length $5$ is closed, $p$ is a radius 2 covering map. Part 3 of Theorem \ref{T3} implies the existence
of desired $\wt{\vf}:(X,\ka')\lo (E,\ka)$. \\
(b) If $p$ is a radius 2 local isomorphism, Theorem \ref{t2} implies that it is a radius 2 covering map and so the existence
of desired $\wt{\vf}:(X,\ka')\lo (E,\ka)$ comes from part a.
\end{proof}
We know that every digital radius 2 covering map of a simply connected digital space is an isomorphism (\cite[Corollary 3.14]{B4}). Theorem \ref{t2} implies that continuous surjection radius $2$ local isomorphisms are radius $2$ covering maps and so we have the following corollary.
\begin{corollary} Let  continuous surjection map $p : (E,\ka)\lo (B,\la)$ be a radius $2$ local isomorphism. Then it is a $(\ka,\la)$-isomorphism if $B$ is simply connected.
\end{corollary}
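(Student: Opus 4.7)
The plan is a short two-step reduction using results already established in the excerpt. First, since $p$ is a continuous surjection that is a radius $2$ local isomorphism, it is in particular a (radius $1$) local isomorphism, so Theorem \ref{t2} immediately makes $p$ a $(\ka,\la)$-covering map. Next, I would promote this to the radius $2$ covering property: writing $p^{-1}(b) = \{e_i\}_{i\in M}$, the decomposition $p^{-1}(N_{\la}(b,1)) = \bigsqcup_{i\in M} N_{\ka}(e_i,1)$ furnished by Definition \ref{d1} extends to $p^{-1}(N_{\la}(b,2)) = \bigsqcup_{i\in M} N_{\ka}(e_i,2)$. Surjectivity of the pieces onto $N_{\la}(b,2)$ follows because each $p|_{N_{\ka}(e_i,2)}$ is an isomorphism by hypothesis; disjointness follows from parts (i)--(ii) of Proposition \ref{p1} (an overlap point would produce two distinct $e_i, e_j$ with a common $\ka$-neighbor mapped to a common image, contradicting Proposition \ref{p1}). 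This is precisely the content of the remark preceding the corollary that ``continuous surjection radius $2$ local isomorphisms are radius $2$ covering maps''.

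Second, once $p$ is known to be a radius $2$ covering map with $B$ simply connected, I would invoke the cited result \cite[Corollary 3.14]{B4}, which asserts that every radius $2$ covering map of a simply connected digital space is a $(\ka,\la)$-isomorphism. This yields the conclusion directly.

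There is no substantive obstacle: the hard work has already been done in Theorem \ref{t2} (covering from local isomorphism) and in the Boxer--Karaca result \cite[Corollary 3.14]{B4} (trivialization over a simply connected base). The only mildly delicate point is the bookkeeping promotion from radius $1$ to radius $2$ covering, and even this is essentially immediate from the radius $2$ isomorphism hypothesis together with Proposition \ref{p1}. Thus the proof reduces to chaining Theorem \ref{t2} with the cited radius $2$ trivialization result.
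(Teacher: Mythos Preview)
Your approach is essentially the paper's own: the corollary is stated immediately after the sentence ``Theorem \ref{t2} implies that continuous surjection radius $2$ local isomorphisms are radius $2$ covering maps,'' and then \cite[Corollary 3.14]{B4} is invoked, exactly as you do. One small correction in your bookkeeping step: disjointness of the $N_{\ka}(e_i,2)$ is cleanest not via Proposition \ref{p1} (which concerns radius $1$ neighbors) but by applying the radius $2$ local isomorphism hypothesis at the putative overlap point $x$, since then $e_i,e_j\in N_{\ka}(x,2)$ with $p(e_i)=p(e_j)=b$ contradicts injectivity of $p|_{N_{\ka}(x,2)}$.
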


\end{document}